\def\VIZ#1{(\ref{#1})}      
\newtheorem{theorem}{Theorem}[section]
\newtheorem{proposition}{Proposition}
\newtheorem*{remark}{Remark}
\DeclareMathOperator{\supp}{supp}
\DeclareMathOperator{\svd}{svd}
\DeclareMathOperator{\diag}{diag}
\DeclareMathOperator{\sign}{sign}
\begin{document}
\title{Multiple Equivalent Solutions for the Lasso}

\author{Yannis Pantazis}
\affiliation{%
  \institution{IACM - FORTH}
  \streetaddress{Vassilika Vouton}
  \city{Heraklion}
  \country{Greece}
  \postcode{70013}
}
\email{pantazis@iacm.forth.gr}

\author{Vincenzo Lagani}
\authornote{Gnosis Data Analysis PC, Heraklion, Greece}
\affiliation{%
  \institution{CSD - UoC}
  \streetaddress{Voutes Campus}
  \city{Heraklion}
  \country{Greece}
  \postcode{70013}
}
\email{vlagani@csd.uoc.gr}

\author{Paulos Charonyktakis}
\affiliation{%
  \institution{Gnosis Data Analysis PC}
  \streetaddress{Vassilika Vouton}
  \city{Heraklion}
  \country{Greece}
  \postcode{70013}
}
\email{haronykt@gmail.com}

\author{Ioannis Tsamardinos}
\authornote{Gnosis Data Analysis PC, Heraklion, Greece, IACM/FORTH, Heraklion, Greece and Huddersfield University, Yorkshire, UK}
\affiliation{%
  \institution{CSD - UoC}
  \streetaddress{Voutes Campus}
  \city{Heraklion}
  \country{Greece}
  \postcode{70013}
}
\email{tsamard@csd.uoc.gr}

\renewcommand{\shortauthors}{Pantazis et al.}

\begin{abstract}
Feature selection is an important problem studied in data analytics seeking to identify a minimal-size feature subset that is optimally predictive for an outcome of interest. It is also a powerful tool in Knowledge Discovery as a means for  gaining domain insight, e.g., identifying which medical quantities carry unique information for the disease status. It is arguably less recognized however, that the problem may have multiple, equivalent solutions. In that case, it is misleading to domain experts to report only one of them and ignore all other equivalent solutions. In this paper, we extend a well-established single, feature selection algorithm (i.e., reporting a single solution), namely the Lasso algorithm, to the multiple solution problem based on formalized notion of equivalence for both classification and regression tasks. Empirical results are obtained using a fully automated pipeline called Just Add Data Bio  or JAD Bio training and selecting multiple, linear as well as nonlinear learners, optimizing hyper-parameter values, and correcting for the bias of multiple inductions (model selection). The results show that multiple solutions do exist in real datasets, as well as the ability of the algorithm to identify a subset of them. A comparison with the Statistical Equivalent Solutions (SES) algorithm shows that Lasso equivalent solutions have better prediction performance at the cost of selecting more features.
\end{abstract}

\keywords{feature selection, multiple equivalent solutions, Lasso regression, Lasso classification}

\maketitle

\section{Introduction}
The problem of supervised Feature Selection (FS) (a.k.a. variable selection) has been studied for several decades in data science fields, such as statistics, machine learning, and data mining. Informally, the problem can be defined as selecting a subset of the available feature set, such that it is of minimal-size and at the same time maximally-predictive for an outcome of interest. Feature selection is performed for several reasons. It may reduce the cost or risk of measuring, computing, storing, and processing of the features. It leads to models of smaller dimensionality that are easier to visualize, inspect, and understand. It may even result in more accurate models by removing the noise, treating the curse-of-dimensionality, and facilitating model fitting. Arguably however, feature selection is primarily employed as a means for knowledge discovery and gaining intuition into the mechanisms generating the data. Indeed, a deep theoretical connection with causality has been identified \cite{Tsamardinos2003}. Actually, it is often the case that FS is the primary goal of an analysis and the learned model is only a by-product. For example, a medical researcher analyzing their molecular data may not care about the classification model to cancerous or healthy tissue; they can diagnose the tissues themselves without the use of the computer on the microscope. The whole point of their analysis is to identify the features (e.g., gene expressions) that carry all the information to perform the diagnosis. 

When FS is employed for knowledge discovery and understanding considering multiple, equivalent solutions is paramount. It is misleading to present to the domain expert a single feature subset and proclaim all other features are either redundant or irrelevant, if there are multiple equivalent solutions. In addition, when features have an associated measurement cost, one seeks the least cost solution. We argue that in these cases, the analyst ought to identify all solutions and present the possibilities to the domain expert. Multiple, equivalent FS solutions often exist in practice \cite{lagani2016feature}, especially in fields with low sample size, high dimensionality and noisy features. Biology and medicine are prototypical fields where these adverse conditions are met. In a seminal paper \cite{ein2004outcome}, Ein-dor and co-authors demonstrate that multiple, equivalent prognostic signature for breast cancer can be found just by analyzing the same dataset with a different partition in training and test set, showing that several genes exist which are practically exchangeable in terms of predictive power. Statnikov and Aliferis \cite{Statnikov2010} further show that the presence of multiple optimal signatures is not a rare occurrence, and is actually common in biological datasets.

Despite the relevance of the problem, to date only few algorithms address it directly. Two constraint-based approaches (constraints stemming from the results of conditionally independence tests) are the Target Information Equivalence (TIE$^*$ \cite{statnikov2013algorithms}) and the Statistically Equivalent Signatures (SES \cite{lagani2016feature}) algorithms. Another algorithm which has been recently proposed by Cox and Battey \cite{Cox2017pnas} searches for multiple solutions across a large number of separate analyses. Unfortunately, both the Cox--Battey method and TIE$^*$ are quite computationally intensive.

In this work, we show that multiple equivalent solutions with controlled performance deviations can be also defined and identified for the popular least absolute shrinkage and selection operator (Lasso) method \cite{Tibshirani1996}. Previous studies based on bootstrapping \cite{Bach2008}, sub-sampling \cite{Meinshausen2010} or locally perturbing the support of the Lasso solution \cite{Hara2017} propose to run the Lasso multiple times and collectively establish the best (unique) solution. Their goal is to improve the ordinary Lasso solution in terms of either accuracy or stability. However, this is irrelevant to our goal which is to determine the set of equivalent Lasso solutions.

The case when {\em strongly-equivalent Lasso solutions} (SELSs) exist has been developed and  studied in \cite{Tibshirani2013}. The author provided there various characterizations of the space of all SELSs. Multiple SELSs however, rarely occur in real datasets. Henceforth, we propose a relaxed definition of equivalence requiring the equivalent solutions to have not exactly the same, but a similar (within a tolerance threshold) in-sample loss. This is useful because with finite sample different feature subsets may appear statistically indistinguishable in terms of loss, even though asymptotically there is a single optimal solution. Specifically, using the root mean squared error (RMSE) as the loss function, we define the {\em RMSE-equivalent Lasso solutions} (RELSs). In contrast, a natural loss for $l_1$-penalized logistic regression (i.e., logistic Lasso) is the deviance leading to {\em Deviance-equivalent Lasso solutions} (DELSs). 
Inspired by the characterization of the SELS space, we devise an algorithm that computes a maximal subspace of the space of relaxed Lasso solutions. A set of constraints that guarantees sparsity as well as the key characteristics of the Lasso solution(s) is imposed. The proposed algorithm handles both RMSE and deviance equivalence in a unified manner and actually it is easily extensible to any convex performance metric. Furthermore, we derive theoretical bounds both for the RMSE and the deviance that relate the allowed tolerance with the spectral properties of the predictors matrix.

We perform empirical experiments on several real datasets providing corroborating evidence of the prevalence of multiple, equivalent solutions in real scenarios and quantifying the efficacy of the proposed Lasso extension. Indeed, the presence of multiple heterogeneous signatures (i.e., solutions) having at the same time low value for their coefficient of variation indicates that multiple, equivalent solutions are common across several application fields.
In addition, we compare the proposed algorithm against the SES algorithm that can scale w.r.t. the dimensionality of the data. A prominent difference is that Lasso equivalent solutions have on average better prediction performance at the cost of selecting larger signatures while the SES algorithm is more parsimonious.
A novelty of the comparison is the employment of a fully automated analysis pipeline (called Just Add Data Bio or JAD Bio) that tries both linear and non-linear learners, optimizes their hyper-parameter values, and corrects the estimation of performance for the bias of the tuning. Thus, the results are obtained after matching each feature selection with the most appropriate learner and its hyper-parameter values.

\section{Definition of Lasso Solutions Equivalence}
\label{def:lasso:equiv:sec}

\subsection{Lasso preliminaries}
We first introduce the Lasso inference problem. Given an outcome vector (a.k.a. target variable) $y\in\mathbb R^n$, a matrix $X\in \mathbb R^{n\times p}$ with the predictor variables and the coefficient vector $\beta\in\mathbb R^p$, the Lagrangian form of Lasso is defined as
\begin{equation}
	\min_{\beta\in\mathbb{R}^p} L(\beta)
	:= \min_{\beta\in\mathbb{R}^p} \frac{1}{2}||y-X\beta||_2^2 + \lambda ||\beta||_1
	\label{lasso:eq}
\end{equation}
where $\lambda\geq 0$ is a penalty parameter. Lasso solvers such as LARS \cite{Efron2004} and FISTA \cite{Beck2009} have been extensively applied in thousands of real problems while extensions of \VIZ{lasso:eq} to generalized linear models (GLMs) such as the logistic regression with $l_1$-norm regularization have been also proposed \cite{Friedman2010}. In GLM Lasso, the quadratic cost term is replaced by the deviance which essentially behaves as the negative log-likelihood.

\vspace{-2mm}
\subsection{Strong equivalence}
\vspace{-1mm}
We briefly present the strong equivalence and its key properties which have been firstly defined and developed in \cite{Tibshirani2013}. It has been proved that when the entries of $X$ are drawn from a continuous probability distribution, the uniqueness of the Lasso solution is almost surely guaranteed \cite{Tibshirani2013}. However, when the predictor variables take discrete values or there exists perfect collinearity then several optimal Lasso solutions may exist. Mathematically, 
given $\lambda>0$, two vectors $\hat{\beta},\hat{\beta}'\in\mathbb R^p$ are Lasso equivalent solutions in the strong sense if and only if $L(\hat{\beta}) = L(\hat{\beta}')$. The set of all SELS is defined as
\begin{equation}
K := \{x\in\mathbb R^p : L(x) = \min_\beta L(\beta)\} \ ,
\end{equation}
Additionally, two SELSs not only share the same cost function but also predict the same values for the target variable \cite{Tibshirani2013} (i.e., $X\hat{\beta} = X\hat{\beta}'$). Consequently, SELSs have the same $l_1$ norm, $||\hat{\beta}||_1 = ||\hat{\beta}'||_1$. Another characteristic of all SELSs is that the non-zero coefficients are not allowed to flip their sign \cite{Tibshirani2013}. Hence, if $\hat{\beta}_{i}>0$ for the $i$-th coefficient of a Lasso solution then $\hat{\beta}_{i}'\ge0$ for any SELS, $\hat{\beta}'$, and, similarly, for the coefficients with negative values.

Proceeding, let $\hat{\beta}$ be the Lasso solution with the largest active set (or support), $\mathcal E:=\supp(\hat{\beta}):=\{i:\hat{\beta}_i\neq 0\}$ which is also known as the equicorrelation set \cite{Tibshirani2013}. The maximal Lasso solution is computed using either the elastic net with vanishing $l_2$ penalty norm or the variant of LARS described in \cite{Tibshirani2013}. 
With a slight abuse of notation due to the restriction of the solution space to the predictor variables with non-zero coefficients, the set of all SELSs can be rewritten as
\begin{equation}
K = \{x\in\mathbb R^{|\mathcal E|}: X_{\mathcal E} (x-\hat{\beta}_{\mathcal E})=0\ \& \ Sx\ge0\} \ ,
\label{polytope:def}
\end{equation}
where $X_{\mathcal E}$ is the matrix that contains only the columns of $X$ that are indexed by the equicorrelation set $\mathcal E$ while $S:=diag(s)$ is the diagonal matrix with the signs of the Lasso solution, $s:=\sign(\hat{\beta}_\mathcal E)$. The first constraint ensures that all elements in $K$ will have the same fitted value while the second constraint ensures that coefficients' sign will not flip.

\subsubsection{Enumeration Algorithm for SELSs}
A bounded polyhedron (i.e., a polytope) can be represented either as the convex hull of a finite set of vertices or by using a combination of linear constraint equalities and inequalities. In particular, the vertices of the convex polytope, $K$, defined above corresponds to the ``extreme'' SELSs. Thus, enumerating the vertices of $K$ from the set of equality and inequality constraints, we can enumerate all SELSs. There exist algorithms that enumerate the vertices defined by a set of inequality constraints (see Fukuda et al. \cite{Fukuda1997} and the references therein) and they can be extended to take into account equality constraints, too. In this paper, we employed the Matlab package by Jacobson \cite{Jacobson2015} which contains tools for converting between the (in)equality and the vertices representations.

\subsubsection{Cardinality of equivalent solutions}
It is noteworthy that the number of vertices, like the number of edges and faces, can grow exponentially fast with the dimension of the polytope making the enumeration algorithm impractical when the dimension is larger than 20. In such cases, we cannot enumerate all the extreme SELSs in reasonable time. Nevertheless, it would be useful to know which of the variables participate in all SELSs and which are not. 
In Section~\ref{disp:indisp:sec}, a practical categorization of the variables into dispensable (participate in some solutions) and indispensable (participate in all solutions) which has been firstly introduced in \cite{Tibshirani2013} is presented and extended to the relaxed equivalence, too.

\subsection{Relaxed equivalence}
A less restrictive equivalence is to require two solutions to have similar performance (loss, error). We will search for solutions whose performance metric differ by a small tolerance from a given solution. Moreover, in order to avoid handling absolute quantities, we suggest working with the relative performance. Denoting by $D(\cdot)$ the performance metric, $\hat{\beta}$ the given solution and $TOL$ the tolerance, we say that $\bar{\beta}$ is performance equivalent to $\hat{\beta}$ if and only if the following relation on the relative performance metric is satisfied
\begin{equation}
D(\bar{\beta}) \le (1+TOL) D(\hat{\beta}) \ .
\end{equation}

However, controlling only the performance metric may add unnecessary redundancy to the relaxed solutions destroying the desired property of sparsity. Indeed, a potentially large number of irrelevant or redundant variables may satisfy the performance constraint for a given $TOL$, nonetheless, they should not belong to the set of equivalent solutions. To alleviate this issue, we propose to restrict the space of relaxed solutions by allowing only the non-zero coefficients of the given solution to vary. Thus, for a given Lasso solution, $\hat{\beta}$ with support $\supp(\hat{\beta})=\{i: \hat{\beta}_i\neq 0\}$, a performance metric, $D(\cdot)$, and a tolerance $TOL$, we define the set of {\em D-equivalent Lasso solutions} as
\begin{equation}
K_D^{TOL}(\hat{\beta}) := \{\beta\in\mathbb R^p: \beta_{-\supp(\hat{\beta})} = 0 \ \  \&  \ \  D(\beta) \leq (1+TOL)D(\hat{\beta})\} \ ,
\label{relaxed:equiv:def}
\end{equation}
where $-A$ denotes the complement set of $A$.
The following proposition states that the convexity of the performance metric with respect to $\beta$ is inherited to the set of all relaxed Lasso solutions.

\begin{proposition}
Let $D(\cdot)$ be a convex performance metric. Then, the set $K_D^{TOL}(\hat{\beta})$ of all $D$-equivalent solutions is convex.
\end{proposition}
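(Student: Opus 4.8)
The plan is to exhibit $K_D^{TOL}(\hat{\beta})$ as an intersection of two convex sets and then invoke the elementary fact that intersections of convex sets are convex. First I would write $K_D^{TOL}(\hat{\beta}) = V \cap S$, where $V := \{\beta\in\mathbb{R}^p : \beta_{-\supp(\hat{\beta})} = 0\}$ is the coordinate subspace determined by the active variables, and $S := \{\beta\in\mathbb{R}^p : D(\beta) \le (1+TOL)D(\hat{\beta})\}$ is a sublevel set of the performance metric. The set $V$ is a linear subspace (it merely imposes the linear equalities $\beta_i = 0$ for $i\notin\supp(\hat{\beta})$), hence convex.

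For $S$, the point to make precise is that $c := (1+TOL)D(\hat{\beta})$ is a fixed real constant: it depends only on the given solution $\hat{\beta}$ and the fixed tolerance $TOL$, not on the free variable $\beta$. Therefore $S = \{\beta : D(\beta) \le c\}$ is exactly the $c$-sublevel set of the convex function $D$, and by the standard result that every sublevel set of a convex function is convex, $S$ is convex. Consequently $K_D^{TOL}(\hat{\beta}) = V\cap S$ is convex.

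Alternatively, and more transparently for a self-contained argument, I would verify convexity directly from the definition of a convex set. Take $\beta^{(1)},\beta^{(2)}\in K_D^{TOL}(\hat{\beta})$ and $t\in[0,1]$, and set $\beta^{(t)} := t\beta^{(1)} + (1-t)\beta^{(2)}$. The support constraint survives because every off-support coordinate of $\beta^{(t)}$ is a convex combination of the two zeros $\beta^{(1)}_i = \beta^{(2)}_i = 0$, $i\notin\supp(\hat{\beta})$. The performance constraint survives by convexity of $D$: $D(\beta^{(t)}) \le t\,D(\beta^{(1)}) + (1-t)\,D(\beta^{(2)}) \le t\,c + (1-t)\,c = c$. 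Hence $\beta^{(t)}\in K_D^{TOL}(\hat{\beta})$, which is precisely convexity.

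I do not anticipate a genuine obstacle here; the proof is essentially a bookkeeping exercise. The only subtleties worth flagging are (i) keeping $(1+TOL)D(\hat{\beta})$ on the right-hand side as a constant threshold rather than treating it as a function of the optimization variable, and (ii) noting that $\hat{\beta}\in K_D^{TOL}(\hat{\beta})$ — since $TOL\ge 0$ and $D(\hat{\beta})\ge 0$ for the RMSE and deviance losses considered — so that the set is nonempty and the statement is not vacuous.
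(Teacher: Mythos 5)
Your proof is correct and your second, direct verification is essentially identical to the paper's own argument: take a convex combination, note the off-support coordinates stay zero, and apply convexity of $D$ against the fixed threshold $(1+TOL)D(\hat{\beta})$. The sublevel-set-intersection framing is just a repackaging of the same facts, so there is nothing substantive to add.
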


\begin{proof}
Let $\bar{\beta}_1,\bar{\beta}_2\in K_D^{TOL}(\hat{\beta})$ and $c_1,c_2\ge 0$ such that $c_1+c_2=1$, then for $\bar{\beta}:=c_1\bar{\beta}_1 + c_2\bar{\beta}_2$ we have $\supp(\bar{\beta}) \subset \supp(\hat{\beta})$ and
\begin{equation*}
\begin{aligned}
D(\bar{\beta}) &= D(c_1\bar{\beta}_1 + c_2\bar{\beta}_2)
\le c_1 D(\bar{\beta}_1) + c_2 D(\bar{\beta}_2) \\
&\le c_1 (1+TOL)D(\hat{\beta}) + c_2 (1+TOL)D(\hat{\beta}) \\
&= (1+TOL)D(\hat{\beta})
\end{aligned}
\end{equation*}
Thus, $\bar{\beta}\in K_D^{TOL}(\hat{\beta})$ and convexity is proved.
\end{proof}

Relaxed equivalence can be defined for several different performance metrics, which depend on the type of the analysis task (classification, regression, survival analysis, etc.). Different characteristics of the relaxed solution space can be conveyed by a suitable performance metric. For instance, when the Lasso cost function is used as a performance metric, the support of $\hat{\beta}$ is maximal and $TOL$ is set to 0 then the strong equivalence is obtained. Next, we present one performance metric for regression problems and one for classification tasks.

\subsubsection{RMSE equivalence for Lasso}
The root mean squared error defined by
$$RMSE(\beta) := \frac{1}{\sqrt{n}}||y-X\beta||_2$$
is a standard performance metric for regression problems. The set of RMSE-equivalent Lasso Solutions (RELSs), $K_{RMSE}^{TOL}(\hat{\beta})$, 
is a convex set since the RMSE is a convex performance metric. We note also that RMSE-equivalence with tolerance $TOL$ is almost equivalent with MSE-equivalence with tolerance $2TOL$ since $RMSE(\beta) \leq (1+TOL)RMSE(\hat{\beta})$ implies that $MSE(\beta) \leq (1+TOL)^2 MSE(\hat{\beta}) \approx (1+2TOL) MSE(\hat{\beta})$ for small values of the tolerance.

\subsubsection{Deviance equivalence for logistic Lasso}
Logistic Lasso is preferred as a feature selection method in binary classification tasks compared to the ordinary Lasso because it takes into consideration the distinguishing properties of the problem. In logistic Lasso, the quadratic term in \VIZ{lasso:eq} is replaced by the deviance. Hence, despite the fact that RMSE is applicable, a more appropriate performance metric is the mean deviance defined as
\begin{equation*}
DEV(\beta) := \frac{1}{n} \sum_{i=1}^n \left[-y_i x_i^T\beta + \log(1+\exp(x_i^T\beta))\right] \ .
\end{equation*}
The set of Deviance-equivalent Lasso Solutions (DELSs), $K_{DEV}^{TOL}(\hat{\beta})$, 
is also a convex set since the deviance is a convex function of its argument.

\section{Enumerating Relaxed Lasso Solutions}
\label{relaxed:mls:sec}
In the definition of relaxed equivalence (i.e., \VIZ{relaxed:equiv:def}), we limit the active set of relaxed equivalent solutions to a subset of the given solution's active set. Despite this restriction, the enumeration of all solutions is still difficult because of the shape of $K_D^{TOL}(\hat{\beta})$; the set continuously curves resulting in an infinite number of extreme solutions. Hence, we propose to restrict the solutions that we will eventually enumerate to a subset of $K_D^{TOL}(\hat{\beta})$. Inspired by the representations of SELSs, we propose an approach that enumerates the vertices of the largest convex polytope of the relaxed space that allows relative performance be less than $TOL$.
The central idea is to relax the Null space constraint while keeping intact the constraint on the sign of the coefficients. For this purpose, we rewrite the Null space constraint using the singular value decomposition for $X_{\mathcal E}$. Let $X_{\mathcal E}$ be decomposed as
\begin{equation}
X_{\mathcal E} = U_{\mathcal E} \Sigma_{\mathcal E} V_{\mathcal E}^T \ ,
\end{equation}
where $U_{\mathcal E}$ and $V_{\mathcal E}$ are orthogonal matrices with the left and right space eigenvectors, respectively, while $\Sigma_{\mathcal E}$ is a diagonal matrix with diagonal elements the ordered singular values (i.e., $\Sigma_{\mathcal E}:=\diag(\sigma_1,...,\sigma_{|\mathcal E|})$ with $\sigma_1\geq ... \geq\sigma_{|\mathcal E|}\geq 0$). Assume that  
$\sigma_{i}\neq0$ for $i=1,...,i^+$ and $\sigma_{i}=0$ for $i=i^++1,...,|\mathcal E|$, then, the constraint $X_{\mathcal E} (x-\hat{\beta}_{\mathcal E})=0$ in \VIZ{polytope:def} is equivalent to $V_{\mathcal E}^+ (x-\hat{\beta}_{\mathcal E})=0$ where $V_{\mathcal E}^+ = [v_1|...|v_{i^+}]$ corresponds to the matrix with the eigenvectors whose singular values are non-zero.

Inspired by the above representation, we propose to relax the constraint to
\begin{equation*}
V_{\mathcal E}^* (x - \hat{\beta}_{\mathcal E}) = 0
\end{equation*}
where $V_{\mathcal E}^*= [v_1|...|v_{i^*}]$ as above while $i^*$ is an integer between $1$ and $|\mathcal{E}|$ to be specified later. Thus, the convex polytope for the relaxed Lasso solutions is defined as
\begin{equation}
K^* := \{x\in\hat{\beta}_{\mathcal E}+[-l,l]^{|\mathcal E|}: V_{\mathcal E}^*(x - \hat{\beta}_{\mathcal E}) = 0 \ \& \ Sx\ge0\} \ .
\label{relax:polytope:def}
\end{equation}
We further constrain the relaxed Lasso solutions to live in a box and have coefficients that are at most $l$ far away from the given Lasso solution, $\hat{\beta}$. This constraint is added so as to guarantee the boundedness of the polyhedron defined by the other two constraint. We choose to set the box size to be $l=||\hat{\beta}||_\infty = \max_i \hat{\beta}_i$ which allows any coefficient to vary till the zero value. 
Since the relaxed polytope is performance metric ignorant, our goal is to choose appropriately $i^*$ so as $K^*$ is a maximal subset of the relaxed solution space, $K_D^{TOL}(\hat{\beta})$.
Figure~\ref{relax:lasso:sol:fig} demonstrates a two-dimensional example where the Lasso regression problem has a unique solution (small red circle). By setting $i^*=1$ the relaxed set of Lasso solutions constitutes the red line that connects the positive axes determined by the direction of $v_2$. The two vertices (solid red dots) are candidates as ``extreme'' relaxed Lasso solutions which are further tested for ensuring that their relative performance is below the maximum tolerance.
Before proceeding with an algorithm that enumerates the vertices of $K^*$ that is guaranteed to be a subset of the relaxed space, we derive theoretical bounds for two performance metrics.

\begin{figure}[t]
\begin{center}
\includegraphics[width=0.4\textwidth]{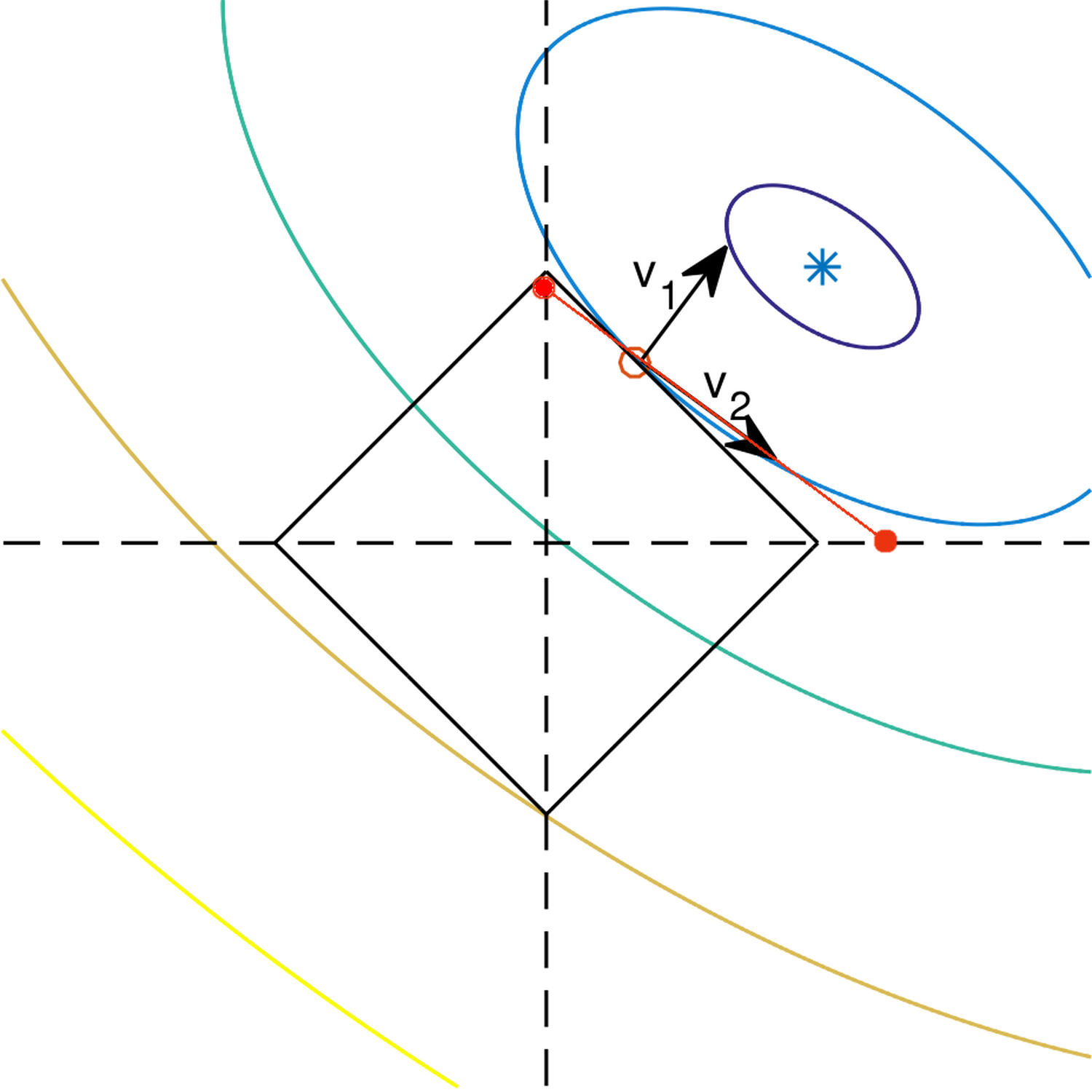}
\caption{
Two dimensional example of Lasso regression problem. The ellipses represent the contours of the objective function, while the rhomboid corresponds to the Lasso penalty. Moving along the direction of $v_2$, the component with the smallest singular value, the loss function is affected the least. By setting $i^* = 1$ we allow the algorithm to move along the direction of $v_2$ (red line) to search for candidate RELSs. 
}
\label{relax:lasso:sol:fig}
\end{center}
\end{figure}

\subsection{Theoretical Bounds}
\vspace{-1mm}
Theorems \ref{RMSE:DEV:thm} present error bounds for the RMSE and for the deviance. The bounds depend on the norm of the vector with the discarded singular values, $\bar{\sigma}^*:=[\sigma_{i^*+1},...,\sigma_{|\mathcal E|}]^T$. The proof of both parts can be found in the Appendix.

\begin{theorem} \label{RMSE:DEV:thm}
(a) Let $\hat{\beta}$ be a Lasso solution\footnote{Variants of the standard Lasso are also valid.}. For any $\bar{\beta}\in K^*$ it holds that
\begin{equation} \label{RMSE:bound}
RMSE(\bar{\beta}) \leq RMSE(\hat{\beta}) +  2 l  ||\bar{\sigma}^*||_\infty \sqrt{\frac{|\mathcal E|}{n}} \ .
\end{equation}

\noindent (b) Let $\hat{\beta}$ be a logistic Lasso solution. For any $\bar{\beta}\in K^*$ it holds that
\begin{equation} \label{DEV:bound}
DEV(\bar{\beta}) \leq DEV(\hat{\beta}) +  2 l ||\bar{\sigma}^*||_2 \sqrt{\frac{|\mathcal E|}{n}}  \ .
\end{equation}
\end{theorem}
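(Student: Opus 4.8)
The plan is to exploit the single structural feature shared by every $\bar\beta\in K^*$. Writing $\delta:=\bar\beta_{\mathcal E}-\hat\beta_{\mathcal E}$, the equality constraint in \VIZ{relax:polytope:def} forces $\delta$ to be orthogonal to $v_1,\ldots,v_{i^*}$, hence to lie in the span of the trailing right singular vectors $v_{i^*+1},\ldots,v_{|\mathcal E|}$; the box constraint gives $\|\delta\|_\infty\le l$ and therefore $\|\delta\|_2\le l\sqrt{|\mathcal E|}$. Moreover, since $\bar\beta$ vanishes off $\mathcal E$ we have $X\bar\beta=X_{\mathcal E}\bar\beta_{\mathcal E}$, so in both metrics the only perturbation away from the values attained by $\hat\beta$ is $X_{\mathcal E}\delta$. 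The core estimate is thus a bound on $\|X_{\mathcal E}\delta\|_2$ when $\delta$ sits in the trailing singular subspace: using $X_{\mathcal E}=U_{\mathcal E}\Sigma_{\mathcal E}V_{\mathcal E}^T$ and the orthogonality of $U_{\mathcal E},V_{\mathcal E}$, one gets $\|X_{\mathcal E}\delta\|_2^2=\sum_{k>i^*}\sigma_k^2\,(v_k^T\delta)^2$ with $\sum_{k>i^*}(v_k^T\delta)^2=\|\delta\|_2^2$, so this diagonal-weighted sum is controlled by the discarded singular values $\bar\sigma^*$.

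For part (a), I would write $RMSE(\bar\beta)=\frac{1}{\sqrt n}\|y-X_{\mathcal E}\hat\beta_{\mathcal E}-X_{\mathcal E}\delta\|_2$ and apply the triangle inequality to peel off $RMSE(\hat\beta)$, leaving the residual $\frac{1}{\sqrt n}\|X_{\mathcal E}\delta\|_2$. Pulling the largest discarded singular value out of the weighted sum gives $\|X_{\mathcal E}\delta\|_2\le\|\bar\sigma^*\|_\infty\|\delta\|_2$, and inserting $\|\delta\|_2\le l\sqrt{|\mathcal E|}$ (the remaining constant being a safe over-estimate) yields \VIZ{RMSE:bound}.

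For part (b), the extra ingredient is that each per-sample deviance term $f_i(t):=-y_it+\log(1+e^t)$ has derivative $-y_i+\mathrm{sigmoid}(t)\in[-1,1]$ because $y_i\in\{0,1\}$; crudely, $|f_i(b_i)-f_i(a_i)|\le |y_i|\,|d_i|+|\log(1+e^{b_i})-\log(1+e^{a_i})|\le 2|d_i|$ with $d_i:=x_i^T\delta$, which is where the factor $2$ originates. Summing over $i$ gives $|DEV(\bar\beta)-DEV(\hat\beta)|\le\frac{2}{n}\|X_{\mathcal E}\delta\|_1$, and Cauchy--Schwarz turns the $\ell_1$ norm into $\frac{2}{\sqrt n}\|X_{\mathcal E}\delta\|_2$. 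Here I would bound the weighted sum by $\|X_{\mathcal E}\delta\|_2\le\|\bar\sigma^*\|_2\|\delta\|_2$ (summing the discarded singular values rather than taking their maximum), which is exactly why $\|\cdot\|_2$ replaces $\|\cdot\|_\infty$, and then substitute $\|\delta\|_2\le l\sqrt{|\mathcal E|}$ to obtain \VIZ{DEV:bound}.

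The only genuinely delicate points are bookkeeping: using the support restriction to replace $X$ by $X_{\mathcal E}$ throughout, correctly identifying that $V_{\mathcal E}^T\delta$ is supported on indices $i^*+1,\ldots,|\mathcal E|$ (so that only $\bar\sigma^*$ multiplies it), and choosing the operator-norm estimate in (a) versus the Frobenius-type estimate in (b) to land on precisely $\|\bar\sigma^*\|_\infty$ and $\|\bar\sigma^*\|_2$. Everything else — the triangle inequality, the mean-value/Lipschitz estimate for the logistic term, Cauchy--Schwarz, and the box bound — is routine. It is worth noting that the argument never invokes optimality of $\hat\beta$; it uses only $\bar\beta\in K^*$, which explains the footnote that variants of the standard Lasso are equally admissible.
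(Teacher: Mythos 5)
Your proof is correct and follows essentially the same route as the paper's: triangle inequality (resp.\ a first--order bound on the per-sample deviance), the SVD identity $\|X_{\mathcal E}\delta\|_2^2=\sum_{k>i^*}\sigma_k^2(v_k^T\delta)^2$ exploiting that $\delta$ lies in the trailing singular subspace, Cauchy--Schwarz, and the box constraint. The only cosmetic difference is where the factor $2$ comes from: you obtain it in (a) by conceding the over-estimate in the stated constant and in (b) from a per-sample Lipschitz constant of $2$ combined with the tight bound $\|\delta\|_2\le l\sqrt{|\mathcal E|}$, whereas the paper uses the convexity gradient inequality (effective Lipschitz constant $1$, since $|\mathrm{sigmoid}-y_i|\le 1$) together with the looser estimate $\|\delta\|_2\le 2l\sqrt{|\mathcal E|}$ --- both land on exactly the stated bounds.
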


\subsection{Enumeration algorithm}
Algorithm \ref{rels:enu:alg} reliably enumerates relaxed solutions that belong to a subset of the set of all relaxed Lasso solutions. It takes as input a Lasso solution, a convex performance metric $D(\cdot)$, a tolerance $TOL$ and the maximum dimension $ d_{max}$ of the subspace that will be explored. We assume that the provided Lasso solution has maximal support hence the equicorrelation set and the sign vector are directly estimated as in lines 2 and 3 of Algorithm \ref{rels:enu:alg}. In the {\bf for} loop, the dimension of the subspace is increased starting from $1$ until the maximum value is reached or there are vertices whose performance metric is above the tolerance. In other words, the algorithm removes the restriction stemming from the component $v$ with the least singular value, allowing itself to search along the $v$ direction. It sequentially removes restrictions corresponding to components with larger singular values. As the dimension of the exploration subspace is increased the possibility of finding solutions whose performance exceeds the tolerance is also increased. If such solutions occur then the algorithm breaks and the solutions that exceed the tolerance are discarded.  Due to the convexity of the performance metric, the polytope that is defined by the remaining solutions is also a subset of the relaxed space. Thus, the algorithm is sound.
 Additionally, a variant of binary search can be utilized instead of the linear search for identifying the optimal $i^*$. However, the performance gains in computational time is minimal due to the fact that the number of vertices grows exponentially fast since the computational cost for the last iteration is proportional to the total cost of all previous iterations.

\begin{algorithm}
	\caption{Relaxed Lasso Solution Enumeration}
	\label{rels:enu:alg}
	\begin{algorithmic}[1]
		\Procedure{RELSEnumeration}{$\hat{\beta}, D(\cdot), TOL, d_{max}$} 
		\State $\mathcal E=\supp(\hat{\beta})$
        \State $s = \sign(\hat{\beta}_{\mathcal E})$
		\State $[\cdot,\cdot,V_{\mathcal E}] = \svd(X_{\mathcal E})$
		\For {$i = 1 : d_{max}$}
        \State $i^* = |\mathcal E|-i$
		\State $V^* = [v_1|...|v_{i^*}]$
		\State $\bar{\beta}^{all} = \text{compute\_vertices} (V^*, s)$.
		\If{$\exists\bar{\beta}\in\bar{\beta}^{all}$ with $D(\bar{\beta})> (1+TOL)D(\hat{\beta})$}
		\State \textbf{break}
		\EndIf
		\EndFor
		\State \textbf{return} only the D-equivalent $\bar{\beta}$'s
		\EndProcedure
	\end{algorithmic}
\end{algorithm}

\begin{remark}
\normalfont We also investigated alternative definitions of the subset, $K^*$, where we relax different properties of the strongly-equivalent characterization. For instance, we relaxed the condition $X(\hat\beta-\bar\beta)=0$ to $||X(\hat\beta-\bar\beta)||<=r$ with $r>0$ resulting, however, in optimization problems which are algorithmically intractable. Overall, the choice of performance measures (RMSE and Deviance) and the particular subset, $K^*$, in (\ref{relax:polytope:def}), were based on the mathematical simplicity and the practical implementation both being very critical for the adoption of a less popular but important idea of searching for and enumerating multiple equivalent solutions.
\end{remark}

\subsection{Determining the reference Lasso solution}
Algorithm \ref{rels:enu:alg} searches for equivalent solutions in the least sensitive directions of the performance metric. It may discard features from the reference support but it cannot include features not initially selected. Therefore, it is important to provide an initial solution with the maximum support. We compute the reference Lasso solution $\hat{\beta}$, by optimally tuning its penalty parameter. We perform cross-validated hyper-parameter tuning using AUC (resp. MAE) as performance metric for classification (resp. regression) problems. It has been highlighted and proved \cite[Proposition 1]{Meinshausen2006} that many noise features are included in the prediction-oracle solution (i.e., the reference solution we estimate). Indeed, the value of the penalty parameter is lower in the prediction-oracle solution than the consistent one hinting towards larger support. We also observe similar behavior in our experiments with real datasets in the Results section. Thus, we expect the respective initial support to contain most, if not all, of the related to the target variable features. Of course, alternative methods based on elastic net or bootstrapping for selecting the reference solution/support can be utilized, however, such an exploration is beyond this paper's scope and it is left as future work.

\section{Variable Categorization}
\label{disp:indisp:sec}
Due to the exponential growth of the polytope's vertices, the complete enumeration is not always feasible. Nevertheless, there is an alternative approach to qualitatively assess the predictor variables by summarizing the interesting findings, e.g., reporting the set of features that belong to all solutions or only to some.
Exploiting the fact that the feasible range of a coefficient's value for all Lasso solutions can be efficiently computed as it was shown in \cite{Tibshirani2013}, a variable categorization is possible. 
We present the existing formulation for SELSs and based on it we extend it for RELSs.

\noindent {\bf SELSs.} For each $i\in\mathcal E$,  the $i$-th coefficient's lower bound $\hat{\beta}_i^{l}$ and upper bound $\hat{\beta}_i^{u}$ are computed by solving the linear programs
$$ \hat{\beta}_i^{l} = \min_x x_i \ \text{subject to}\ X_{\mathcal E}x=X_{\mathcal E} \hat{\beta}_{\mathcal E}\ \&\ Sx\ge0 \ ,$$
and
$$ \hat{\beta}_i^{u} = \max_x x_i \ \text{subject to}\ X_{\mathcal E}x=X_{\mathcal E} \hat{\beta}_{\mathcal E}\ \&\ Sx\ge0 \ ,$$
respectively. If $0$ is an element of the set $[\hat{\beta}_i^{l},\hat{\beta}_i^{u}]$ then the $i$-th variable is called dispensable otherwise it is called indispensable and they participate in all Lasso solutions. The fact that the sign of a coefficient remains the same in all SELSs implies that dispensable variables have either $\hat{\beta}^{l}=0$ or $\hat{\beta}^{u}=0$ while indispensable variables have either $\hat{\beta}^{l}>0$ or $\hat{\beta}^{u}<0$. From a practical perspective, the number of linear programs to be solved is $2|\mathcal{E}|$ which is feasible.


\noindent {\bf Relaxed equivalence.} Similarly, we can relax the criterion on dispensable/indispensable variables. Since the computational cost of the linear programs that are solved is manageable, we can discard the constraint on the control of the subspace dimension (i.e., there is no need for $d_{max}$). The linear programs for the lower and upper bounds for a given $i^*$ are
$$ \bar{\beta}_i^{l} = \min_x x_i \ \text{s.t.} \ V_{\mathcal E}^*x=V_{\mathcal E}^* \hat{\beta}_{\mathcal E}, \ -l\le x-\hat{\beta}_{\mathcal E} \le l \ \&\ Sx\ge0 \ , $$
and
$$ \bar{\beta}_i^{u} = \max_x x_i \ \text{s.t.} \ V_{\mathcal E}^*x=V_{\mathcal E}^* \hat{\beta}_{\mathcal E}, \ -l\le x-\hat{\beta}_{\mathcal E} \le l \ \&\ Sx\ge0 \ , $$
respectively. Evidently, as we increase $i^*$ the number of dispensable variables is increased while the number of indispensable variables is decreased. 

While in this paper we follow the terminology of "dispensable'' and ``indispensable'' variables from \cite{Tibshirani2013}, we would like to emphasize that {\em it may be misleading}. It implies that dispensable variables could be dispensed with and filtered out in some sense. Consider however, two features $W$ and $Z$ that are perfect copies of each other and at the same time, the strongest predictors. According to the definition they are dispensable but neither can be filtered out without a measurable loss in performance. Instead, we have recently proposed the terms ``indispensable'' and ``replaceable'' \footnote{Invited talk at KDD 2017 conference ``Advances in Causal-Based Feature Selection'' (http://mensxmachina.org/en/presentations/).} to clearly indicate that they the latter features can be replaced, not filtered out.

\section{Experimental Setup}
\label{exp:setup}


To evaluate the proposed method and compare against the SES algorithm, we used 9 datasets (Table~\ref{table:datasets}) trying to include a large variance of sample and feature sizes. No dataset was excluded from the evaluation based on the results. Seven datasets have binary outcome (i.e., classification tasks), while the other two have continuous outcome (i.e., regression tasks). The analyses were performed using 
the Just Add Data Bio v0.7 (JAD Bio; Gnosis Data Analysis; www.gnosisda.gr) 
automated predictive analysis engine.

\begin{table}[t]
\small
\centering
\caption{Dataset Overview.}
\label{table:datasets}
\vspace{-3mm}
\begin{tabular}{cccc}
\textbf{Dataset} & \textbf{Sample size} & \textbf{Feature Size}  & \textbf{Field} \\ \hline
\multicolumn{4}{c}{Classification} \\ \hline
arcene & 200 & 10000  & Bioinformatics \\
bankruptcy & 7063 & 147  & Finance \\
Ovarian & 216 & 2190 & Bioinformatics \\
Parkinson & 195 & 22  & Bioinformatics\\
prostate & 102 & 5966  & Bioinformatics\\
secom & 1567 & 590  & Industry \\
vV & 224 & 70  & Bioinformatics \\
\hline \multicolumn{4}{c}{Regression} \\ \hline
BC\_Continuous & 286 & 22282  & Bioinformatics \\
MP & 4401 & 202  & Materials \\
\end{tabular}
\end{table} 

\subsection{Just Add Data Bio Engine}
JAD Bio employs a fully-automated machine learning pipeline for producing a predictive model given a training dataset, and an estimate of its predictive performance. JAD Bio performs multiple feature selection and predictive modeling. It tries several configurations, i.e., combinations of preprocessing algorithms, feature selection algorithm, predictive modeling algorithms, and values of their hyper-parameters using a grid-search in the space of hyper-parameters. The best configuration is determined using K-fold Cross Validation, which is then used to produce the final models for each feature subset on all data. The cross-validated performance of the best configuration is known to be optimistic due to the multiple tries \cite{tsamardinos2015performance}; JAD Bio  estimates and removes the optimism using a bootstrap method before returning the final performance estimate \cite{tsamardinos2017bootstrapping}. The final performance estimates of JAD Bio are actually slightly conservative.

Specifically in terms of algorithms, JAD Bio trains several basic and advanced, linear and non-linear, multivariate machine learning and statistical models; namely, for classification problems trains Support Vector Machine models (SVMs) with linear, full polynomial, and Gaussian kernels, Ridge Logistic Regression models, Random Forests models, and Decision Trees, while for regression problems trains Ridge Linear Regression models instead of the logistic ones. In the following set of experiments more than 500 configurations were tried each time to find the optimal ones, resulting in more than 10000 models trained to compare two feature selection algorithms over nine datasets. 
JAD Bio is able to incorporate other user-defined preprocessing, transformation, and feature selection algorithms written in Java, R, or Matlab programming languages. This functionality allowed us to incorporate the proposed feature selection method into the pipeline.

\subsection{Evaluation Pipeline and Tuning}
Two variants of JAD Bio were used and compared: the original with the SES algorithm for the feature selection and an alternative with a Matlab implementation of the proposed method.
To evaluate and compare them we split each dataset in two stratified parts. Then, JAD Bio is applied on each half (as training set) for obtaining a set of predictive models and their performance estimation, corresponding to different feature subsets (i.e., solutions found by the feature selection algorithms). The trained models are then tested on the remaining data (as test set) to compute the performance on new data. We then test whether the performance of the solutions found are actually similar. Since we partition the data into two, equivalence of the performance of the feature selection subset is determined using the same sample size as in training and thus having the same statistical power. The new proposed method was tuned with penalty parameter, $\lambda$, taking values from $10^{-3}$  to $10^{3}$ with multiplicative step $10^{0.1}$. We set the $TOL$ value to 0.01 allowing Lasso solutions having up to 1\% performance error in the training set.

\section{Results}
\label{results:sec}

\begin{table*}[t]
\small
\centering
\caption{Lasso and SES comparison. For each dataset, split and method (Lasso/SES), the table reports the training set sample size, the number of identified signatures, the coefficient of variation (CoV) across the multiple signatures for the hold-out performance (AUC for classification, R$^2$ for regression), the CoV for the number of selected variables, the average hold-out performance (along with the two-tail, t-test adjusted p-value assessing the difference between SES and Lasso), as well as the average number of selected variables (along with the two-tail, t-test adjusted p-value assessing that the average number of variable selected by Lasso is different from the fixed number of variables selected by SES). Classification datasets are reported first, while the regression ones are at the bottom.}
\label{table:classificationResults}
\vspace{-3mm}
\begin{tabular}{ccccccccccc}  
\textbf{dataset} & \textbf{split} & \textbf{method} & \textbf{\begin{tabular}[c]{@{}c@{}}sample \\ size\end{tabular}} & \textbf{\# signatures} & \textbf{\begin{tabular}[c]{@{}c@{}}CoV \\ perform.\end{tabular}} & \textbf{\begin{tabular}[c]{@{}c@{}}CoV \\ vars\end{tabular}} & \textbf{\begin{tabular}[c]{@{}c@{}}average \\ perform.\end{tabular}} & \textbf{\begin{tabular}[c]{@{}c@{}}p-value \\ perform.\end{tabular}} & \textbf{\begin{tabular}[c]{@{}c@{}}average \\ \# vars\end{tabular}} & \textbf{\begin{tabular}[c]{@{}c@{}}p-value \\ \# vars\end{tabular}} \\\hline
arcene           & split1         & Lasso           & 100                                                             & 81                     & 0.0099                                                             & 0.0198                                                      & 0.8524                                                                  &                                                                         & 56.5                                                                &                                                                     \\
arcene           & split1         & SES             & 100                                                             & 100                    & 0.0373                                                             &                                                             & 0.7394                                                                  & $\leq$ 0.0001                                                           & 5                                                                   & $\leq$ 0.0001                                                       \\
arcene           & split2         & Lasso           & 100                                                             & 27                     & 0.0128                                                             & 0.0326                                                      & 0.8011                                                                  &                                                                         & 29.1                                                                &                                                                     \\
arcene           & split2         & SES             & 100                                                             & 100                    & 0.0088                                                             &                                                             & 0.7498                                                                  & $\leq$ 0.0001                                                           & 27                                                                  & $\leq$ 0.0001                                                       \\
bankruptcy       & split1         & Lasso           & 3531                                                            & 32                     & 0.0005                                                             & 0.022                                                       & 0.9704                                                                  &                                                                         & 51.8                                                                &                                                                     \\
bankruptcy       & split1         & SES             & 3531                                                            & 6                      & 0.0004                                                             &                                                             & 0.9615                                                                  & $\leq$ 0.0001                                                           & 17                                                                  & $\leq$ 0.0001                                                       \\
bankruptcy       & split2         & Lasso           & 3532                                                            & 40                     & 0.0005                                                             & 0.0126                                                      & 0.9697                                                                  &                                                                         & 72.1                                                                &                                                                     \\
bankruptcy       & split2         & SES             & 3532                                                            & 2                      & 0.0006                                                             &                                                             & 0.9615                                                                  & 0.0435                                                                  & 14                                                                  & $\leq$ 0.0001                                                       \\
Ovarian          & split1         & Lasso           & 107                                                             & 17                     & 0.0021                                                             & 0.0309                                                      & 0.9803                                                                  &                                                                         & 35.7                                                                &                                                                     \\
Ovarian          & split1         & SES             & 107                                                             & 6                      & 0.0065                                                             &                                                             & 0.9555                                                                  & 0.0003                                                                  & 6                                                                   & $\leq$ 0.0001                                                       \\
Ovarian          & split2         & Lasso           & 109                                                             & 3                      & 0.0047                                                             & 0.0777                                                      & 0.9654                                                                  &                                                                         & 19.7                                                                &                                                                     \\
Ovarian          & split2         & SES             & 109                                                             & 2                      & 0.0008                                                             &                                                             & 0.9647                                                                  & 0.8298                                                                  & 7                                                                   & 0.00641                                                             \\
Parkinson        & split1         & Lasso           & 97                                                              & 2                      & 0.0124                                                             & 0.0566                                                      & 0.9327                                                                  &                                                                         & 12.5                                                                &                                                                     \\
Parkinson        & split1         & SES             & 97                                                              & 2                      & 0.0026                                                             &                                                             & 0.9077                                                                  & 0.2421                                                                  & 1                                                                   & 0.03319                                                             \\
Parkinson        & split2         & Lasso           & 98                                                              & 1                      &                                                                    &                                                             & 0.9018                                                                  &                                                                         & 8                                                                   &                                                                     \\
Parkinson        & split2         & SES             & 98                                                              & 2                      & 0.0014                                                             &                                                             & 0.857                                                                   &                                                                         & 2                                                                   &                                                                     \\
prostate         & split1         & Lasso           & 51                                                              & 6                      & 0.0213                                                             & 0.0456                                                      & 0.9282                                                                  &                                                                         & 22.7                                                                &                                                                     \\
prostate         & split1         & SES             & 51                                                              & 3                      & 0.0276                                                             &                                                             & 0.9677                                                                  & 0.1554                                                                  & 3                                                                   & $\leq$ 0.0001                                                       \\
prostate         & split2         & Lasso           & 51                                                              & 1                      &                                                                    &                                                             & 0.9215                                                                  &                                                                         & 7                                                                   &                                                                     \\
prostate         & split2         & SES             & 51                                                              & 7                      & 0.0226                                                             &                                                             & 0.9268                                                                  &                                                                         & 2                                                                   &                                                                     \\
secom            & split1         & Lasso           & 783                                                             & 80                     & 0.0124                                                             & 0.0257                                                      & 0.6934                                                                  &                                                                         & 45.3                                                                &                                                                     \\
secom            & split1         & SES             & 783                                                             & 100                    & 0.0151                                                             &                                                             & 0.6071                                                                  & $\leq$ 0.0001                                                           & 15                                                                  & $\leq$ 0.0001                                                       \\
secom            & split2         & Lasso           & 784                                                             & 38                     & 0.0126                                                             & 0.0088                                                      & 0.6914                                                                  &                                                                         & 124.7                                                               &                                                                     \\
secom            & split2         & SES             & 784                                                             & 100                    & 0.0146                                                             &                                                             & 0.5741                                                                  & $\leq$ 0.0001                                                           & 12                                                                  & $\leq$ 0.0001                                                       \\
vV               & split1         & Lasso           & 112                                                             & 2                      & 0.0255                                                             & 0.1286                                                      & 0.6794                                                                  &                                                                         & 11                                                                  &                                                                     \\
vV               & split1         & SES             & 112                                                             & 4                      & 0.0309                                                             &                                                             & 0.6518                                                                  & 0.2422                                                                  & 4                                                                   & 0.0985                                                              \\
vV               & split2         & Lasso           & 112                                                             & 3                      & 0.0421                                                             & 0.1083                                                      & 0.6792                                                                  &                                                                         & 5.3                                                                 &                                                                     \\
vV               & split2         & SES             & 112                                                             & 2                      & 0.0017                                                             &                                                             & 0.6619                                                                  & 0.441                                                                   & 5                                                                   & 0.4226                                                              \\\hline
BC\_Continuous   & split1         & Lasso           & 143                                                             & 12                     & 0.0095                                                             & 0.0584                                                      & 0.8771                                                                  &                                                                         & 13.6                                                         &                                                                     \\
BC\_Continuous   & split1         & SES             & 143                                                             & 6                      & 0.0017                                                             &                                                             & 0.8851                                                                  & 0.0091                                                                  & 12                                                                  & $\leq$ 0.0001
                                                            \\
BC\_Continuous   & split2         & Lasso           & 143                                                             & 3                      & 0.0013                                                             & 0.0769                                                      & 0.8916                                                                  &                                                                         & 13                                                                  &                                                                     \\
BC\_Continuous   & split2         & SES             & 143                                                             & 2                      & 0.007                                                              &                                                             & 0.8743                                                                  & 0.1501                                                                  & 14                                                                  & 0.2254                                                         \\
MP               & split1         & Lasso           & 2200                                                            & 75                     & 0.0017                                                             & 0.0095                                                      & 0.5435                                                                  &                                                                         & 106.8                                                              &                                                                     \\
MP               & split1         & SES             & 2200                                                            & 24                     & 0.0034                                                             &                                                             & 0.4763                                                                  & $\leq$ 0.0001                                                           & 17                                                                  & $\leq$ 0.0001
                                                           \\
MP               & split2         & Lasso           & 2201                                                            & 70                     & 0.0033                                                             & 0.0184                                                      & 0.5469                                                                  &                                                                         & 53.7                                                         &                                                                     \\
MP               & split2         & SES             & 2201                                                            & 3                      & 0.0011                                                             &                                                             & 0.5117                                                                  & $\leq$ 0.0001                                                           & 21                                                                  & $\leq$ 0.0001
                                                          
\end{tabular}
\end{table*}

\subsection{Multiple, equivalent solutions are common for predictive analytics tasks}
Table \ref{table:classificationResults} reports the Lasso and SES results. Both algorithms identify multiple solutions on almost all datasets and splits, indicating that \emph{the presence of multiple solutions is a common problem across several application domains}. 

At the same time, the coefficient of variation (CoV, defined as the ratio between standard deviation and average) for the hold-out performances is always quite low; the median CoV for Lasso is $0.0097$ and for SES it is $0.0049$ (maximum values are respectively $0.0421$ and $0.0373$). This means that the different solutions produce models with performance having $\sim 0.01$ standard deviation around the mean AUC value. \emph{These results support the claim that multiple solutions found by Lasso and SES are indeed equivalent in terms of predictive power}.

Interestingly, when SES retrieves a large number of signatures (i.e., solutions) so does Lasso, and vice versa. Both algorithms provide numerous solutions for the arcene and secom datasets, only a few solutions for Ovarian, Parkinson, prostate, vV, BC\_Continuous, and MP (an exception is the bankruptcy dataset). This is despite the fact that the two algorithms follow quite different approaches. Thus, the results provide evidence that the order of magnitude for the number of equivalent solutions is a characteristic of the data at hand.

\subsection{Lasso and SES trade-offs}
Figure \ref{figure:classificationResults} graphically represents the hold-out performances (x-axis, AUC for classification datasets and R$^2$ for regression ones) and number of selected features (y-axis) for each signature retrieved by Lasso (circles) and SES (triangles) for the first split of each dataset (distinguished by color). The figure shows that Lasso consistently selects up to one order of magnitude more variables than SES (Table \ref{table:classificationResults}). In nine out of $18$ comparisons ($9$ datasets $\times$ 2 splits each) Lasso also achieves better performances than SES (adjusted p-value $\leq 0.05$), with an average AUC difference of $0.034$ between the two methods across classification datasets and $0.028$ R$^2$ between regression datasets. In summary, the two algorithms are quite complementary, with SES providing more parsimonious models with a moderate cost in terms of predictive power achieving different trade-offs. SES sacrifices optimality (does not attempt to identify all predictive features, or as it is known in the respective literature the full Markov Blanket) to achieve scalability; however, it is easily applicable to different data types by equipping it with an appropriate conditional independence test (see a comparison of Lasso and SES-based feature selection for time-course data \cite{Tsagris2018}).

\subsection{Multiple solutions are heterogeneous in terms of included features}
Table \ref{table:signatureHeterogenuity} presents, for both Lasso and SES, the number of identified signatures categorized according to their size. For sake of clarity only results from the first split are presented, with the results on the second split following almost identical trends. By construction SES retrieves signatures with equal size, while Lasso identifies signatures that can have different sizes. For each group of equal-size signatures, we computed the average Jaccard index within the group as well as  the average number of solution-specific features across all pairs of signatures. The Jaccard index between two sets is defined as the ratio between the size of their intersection and the size of their union, and ranges from $0$ (disjoint sets) to $1$ (perfectly overlapping sets), with two equal-size sets sharing half of their elements achieving $0.33$. For computing the number of solution-specific features between two sets we first take their union and then subtract their intersection. The resulting features are either in one solution or in the other, but not in both. For the cases where numerous solutions are selected by SES (arcene and secom datasets), both the Jaccard index and the number of dispensable elements indicate a quite elevated heterogeneity, meaning that the multiple solutions proposed by SES include solutions quite different from each other. When only a handful of signatures are retrieved by SES, they tend to be more homogeneous. This is a consequence of the mechanisms used by SES for producing multiple signatures,
which constraints each signature to differ from its closest sibling only for one variable.
In contrast, Lasso solutions can have more than two signature-specific features even  when only two signatures are retrieved. This can be seen in several rows of Table \ref{table:signatureHeterogenuity}, e.g., for the arcene dataset (signature size $53$, $54$) and for the secom dataset (signature size $43$). Interestingly, in several cases Lasso solutions differ from each other on average by $5$ or more solution-specific features, confirming that the multiple solutions retrieved by Lasso differ from each other due to the joint replacement of several variables.

\section{Conclusion}

In this paper we present an algorithm for deriving multiple, equivalent Lasso solutions with theoretical guarantees. The algorithm is empirically evaluated using an automatic platform, JAD Bio, over several learners and tuning hyper-parameters; the same platform is employed for a comparative evaluation against the  SES algorithm. The results confirm that (a) multiple Lasso solutions are often found in real-world datasets, (b) these multiple solutions can substantially differ from each other, despite their similar performances, and (c) Lasso and SES provide comparable results, with the first being on average slightly more predictive at the cost of selecting a larger number of features.

\begin{table}[t]
\small
\centering
\caption{Feature heterogeneity in multiple signatures. For the first split of each dataset the table reports the number of signatures grouped by size. The average Jaccard index and the average number of solution-specific features between each pair of subsets are presented. Results from the second split follow similar patterns (not shown).}
\label{table:signatureHeterogenuity}
\vspace{-3mm}
\begin{tabular}{ccccc}
\textbf{dataset} & \textbf{\begin{tabular}[c]{@{}c@{}}signat. \\ size\end{tabular}} & \textbf{\# signat.} & \textbf{\begin{tabular}[c]{@{}c@{}}Jaccard \\ index\end{tabular}} & \textbf{\begin{tabular}[c]{@{}c@{}}sol. spec. \\ features\end{tabular}} \\\hline
\multicolumn{5}{c}{Lasso} \\ \hline
arcene           & 53                                                               & 2                   & 0.86                                                              & 8                                                                        \\
           & 54                                                               & 2                   & 0.83                                                              & 10                                                                       \\
           & 55                                                               & 6                   & 0.89                                                              & 6.4                                                                      \\
           & 56                                                               & 26                  & 0.92                                                              & 4.9                                                                      \\
           & 57                                                               & 31                  & 0.94                                                              & 3.5                                                                      \\
           & 58                                                               & 13                  & 0.97                                                              & 2                                                                        \\
           & 59                                                               & 1                   &                                                                   &                                                                          \\
bankruptcy       & 49, 54                                                           & 1                   &                                                                   &                                                                          \\
       & 50                                                               & 3                   & 0.92                                                              & 4                                                                        \\
       & 51                                                               & 9                   & 0.92                                                              & 4.1                                                                      \\
       & 52                                                               & 10                  & 0.94                                                              & 3.1                                                                      \\
       & 53                                                               & 8                   & 0.96                                                              & 2                                                                        \\
Ovarian          & 34                                                               & 2                   & 0.84                                                              & 6                                                                        \\
          & 35                                                               & 6                   & 0.89                                                              & 4.3                                                                      \\
          & 36                                                               & 5                   & 0.92                                                              & 3                                                                        \\
          & 37                                                               & 3                   & 0.95                                                              & 2                                                                        \\
          & 38                                                               & 1                   &                                                                   &                                                                          \\
p53              & 71, 72                                                           & 1                   &                                                                   &                                                                          \\
              & 73                                                               & 7                   & 0.93                                                              & 5.3                                                                      \\
              & 74                                                               & 18                  & 0.94                                                              & 5                                                                        \\
              & 75                                                               & 18                  & 0.95                                                              & 3.6                                                                      \\
              & 76                                                               & 14                  & 0.97                                                              & 2                                                                        \\
              & 77                                                               & 1                   &                                                                   &                                                                          \\
Parkinson        & 12, 13                                                           & 1                   &                                                                   &                                                                          \\
prostate         & 21, 22, 24                                                       & 1                   &                                                                   &                                                                          \\
         & 23                                                               & 3                   & 0.92                                                              & 2                                                                        \\
secom            & 41, 48                                                           & 1                   &                                                                   &                                                                          \\
            & 43                                                               & 2                   & 0.83                                                              & 8                                                                        \\
            & 44                                                               & 14                  & 0.88                                                              & 5.9                                                                      \\
            & 45                                                               & 29                  & 0.9                                                               & 4.8                                                                      \\
            & 46                                                               & 21                  & 0.93                                                              & 3.4                                                                      \\
            & 47                                                               & 12                  & 0.96                                                              & 2                                                                        \\
vV               & 10, 12                                                           & 1                   &                                                                   &                                                                          \\\hline
BC\_Continuous   & 12, 15                                                           & 1                   &                                                                   &                                                                          \\
   & 13                                                               & 4                   & 0.86                                                              & 2                                                                        \\
   & 14                                                               & 6                   & 0.87                                                              & 2                                                                        \\
MP               & 104, 109                                                         & 1                   &                                                                   &                                                                          \\
               & 105                                                              & 8                   & 0.94                                                              & 7                                                                        \\
               & 106                                                              & 18                  & 0.95                                                              & 5.1                                                                      \\
               & 107                                                              & 30                  & 0.97                                                              & 3.6                                                                      \\
               & 108                                                              & 17                  & 0.98                                                              & 2                                                                       \\\hline
\multicolumn{5}{c}{SES} \\ \hline
arcene           & 5                                                                   & 100                    & 0.37                                                              & 4.8                                                                      \\
bankruptcy       & 17                                                                  & 6                      & 0.85                                                              & 2.8                                                                      \\
Ovarian          & 6                                                                   & 6                      & 0.63                                                              & 2.8                                                                      \\
Parkinson        & 1                                                                   & 2                      & 0                                                                 & 2                                                                        \\
prostate         & 3                                                                   & 3                      & 0.5                                                               & 2                                                                        \\
secom            & 15                                                                  & 100                    & 0.6                                                               & 7.7                                                                      \\
vV               & 4                                                                   & 4                      & 0.51                                                              & 2.7                                                                      \\\hline
BC\_Continuous   & 12                                                                  & 6                      & 0.79                                                              & 2.8                                                                      \\
MP               & 17                                                                  & 24                     & 0.79                                                              & 4         
\end{tabular}
\end{table}

\begin{figure}[t]
\begin{center}
\includegraphics[width=0.5\textwidth]{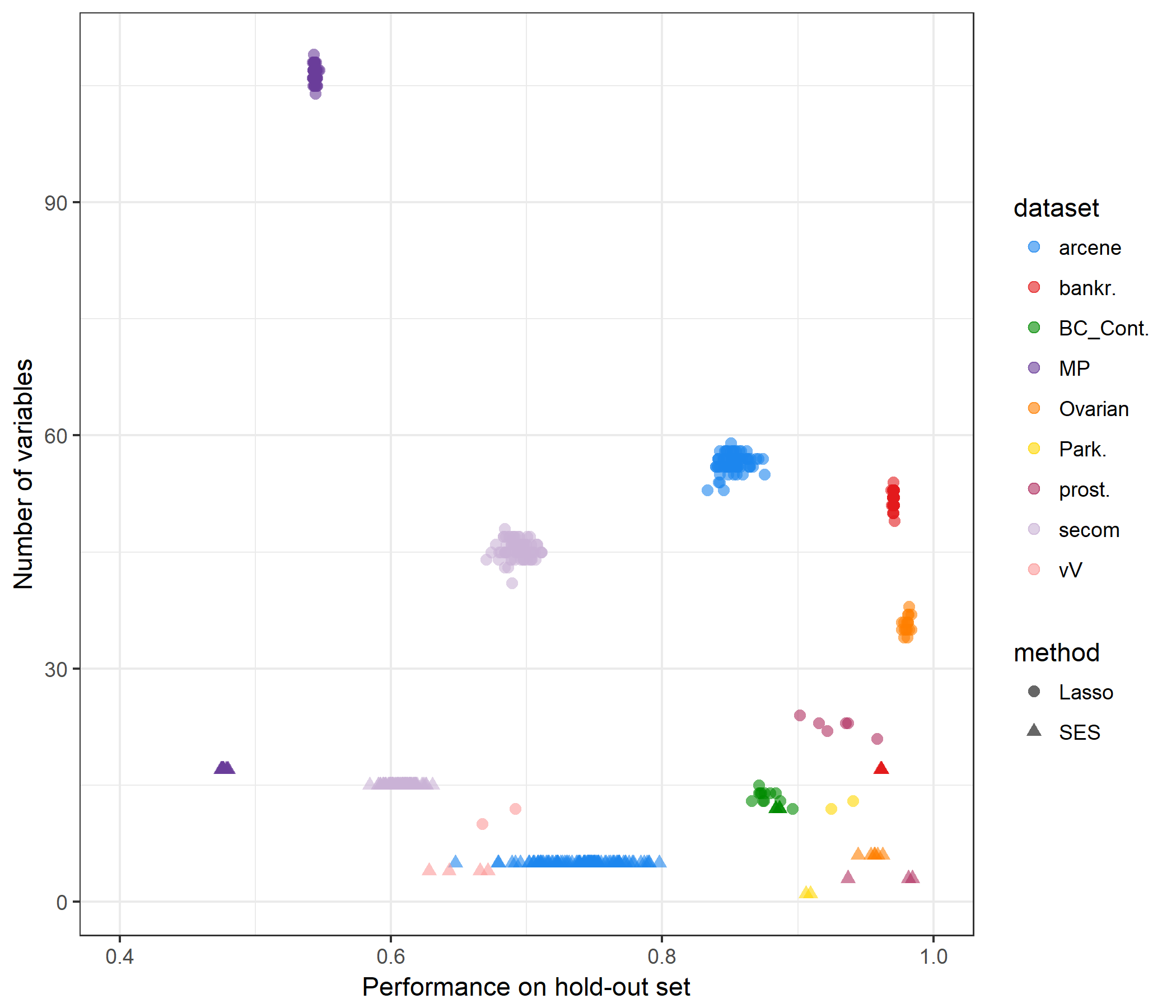}
\caption{Lasso and SES results. Each point represents a single solution, whose hold-out performance is reported on the x-axis (AUC for classification datasets, R$^2$ for regression ones), while the number of elements in each signature is reported on the y-axis. Triangles represent SES' solutions, circles Lasso', and different datasets are reported in different colors. For sake of clarity only the first split for each dataset is reported; results on second splits are similar (not shown).}
\label{figure:classificationResults}
\end{center}
\end{figure}
\vspace{-2mm}
\section*{Acknowledgments}
\vspace{-1mm}
We would like to thank Giorgos Borboudakis for his constructive discussions. The research leading to these results has received funding from the European Research Council under the European Union's Seventh Framework Programme (FP/2007-2013) / ERC Grant Agreement n. 617393. PC was funded by Gnosis Data Analysis PC.

\bibliographystyle{unsrt}
\bibliography{biblio}

\appendix
\section{Proof of Theorem \ref{RMSE:DEV:thm}}

\begin{proof}
\noindent {\bf  (a).} First, apply the Minkowski inequality as follows
\begin{equation*} \small 
\begin{aligned} 
||y-X_{\mathcal E} \bar{\beta}_{\mathcal E}||_2
&= ||y-X_{\mathcal E} \hat{\beta}_{\mathcal E} + X_{\mathcal E} (\hat{\beta}_{\mathcal E}-\bar{\beta}_{\mathcal E})||_2 \\
&\leq ||y-X_{\mathcal E} \hat{\beta}_{\mathcal E}||_2 + ||X_{\mathcal E} (\hat{\beta}_{\mathcal E}-\bar{\beta}_{\mathcal E}) ||_2
\end{aligned}
\end{equation*}

Next, define the matrices $\bar{V}_{\mathcal E}^* := [v_{i^*+1}|...|v_{|\mathcal E|}]$,
$$
\Sigma_{\mathcal E}^* = \begin{bmatrix} diag(\sigma_1,...,\sigma_{i^*}) & 0 \\ 0 & 0
\end{bmatrix},
\ \ \text{and,} \ \ 
\bar{\Sigma}_{\mathcal E}^* = \begin{bmatrix} 0 & 0 \\ 0 & diag(\sigma_{i^*+1},...,\sigma_{|\mathcal E|})
\end{bmatrix}
$$
It holds that $V_{\mathcal E}=[V_{\mathcal E}^*|\bar{V}_{\mathcal E}^*]$ as well as $\Sigma_{\mathcal E} = \Sigma_{\mathcal E}^* + \bar{\Sigma}_{\mathcal E}^*$. Using these matrices we write

\begin{equation*}\small
\begin{aligned}
X_{\mathcal E} \bar{\beta}_{\mathcal E} &= U_{\mathcal E} \Sigma_{\mathcal E} V_{\mathcal E}^T \bar{\beta}_{\mathcal E} = U_{\mathcal E} \Sigma_{\mathcal E}^* (V_{\mathcal E}^*)^T \bar{\beta}_{\mathcal E} + U_{\mathcal E} \bar{\Sigma}_{\mathcal E}^* (\bar{V}_{\mathcal E}^*)^T \bar{\beta}_{\mathcal E} \\
&= U_{\mathcal E} \Sigma_{\mathcal E}^* (V_{\mathcal E}^*)^T \hat{\beta}_{\mathcal E} + U_{\mathcal E} \bar{\Sigma}_{\mathcal E}^* (\bar{V}_{\mathcal E}^*)^T \bar{\beta}_{\mathcal E} \\
&= U_{\mathcal E} \Sigma_{\mathcal E} V_{\mathcal E}^T \hat{\beta}_{\mathcal E} + U_{\mathcal E} \bar{\Sigma}_{\mathcal E}^* (\bar{V}_{\mathcal E}^*)^T (\bar{\beta}_{\mathcal E}-\hat{\beta}_{\mathcal E}) \\
&= X_{\mathcal E} \hat{\beta}_{\mathcal E} + U_{\mathcal E} \bar{\Sigma}_{\mathcal E}^* (\bar{V}_{\mathcal E}^*)^T (\bar{\beta}_{\mathcal E}-\hat{\beta}_{\mathcal E})
\end{aligned}
\end{equation*}

Hence, we bound the error term in the inequality above by
\begin{equation*}\small
\begin{aligned}
||X_{\mathcal E} (\hat{\beta}_{\mathcal E}-\bar{\beta}_{\mathcal E}) ||_2
&= ||U_{\mathcal E} \bar{\Sigma}_{\mathcal E}^* (\bar{V}_{\mathcal E}^*)^T (\bar{\beta}_{\mathcal E}-\hat{\beta}_{\mathcal E})||_2 \\
&\leq ||U_{\mathcal E} \bar{\Sigma}_{\mathcal E}^* (\bar{V}_{\mathcal E}^*)^T||_2 ||\bar{\beta}_{\mathcal E}-\hat{\beta}_{\mathcal E}||_2
\end{aligned}
\end{equation*}
where we used the vector-induced matrix norm $||A||_2 := \sup_{x\neq0} \frac{||Ax||_2}{||x||_2}$ which is also known as the spectral norm. It holds that $||A||_2 = \sigma_{max}(A)$ thus the bound is rewritten as
\begin{equation*}
||X_{\mathcal E} (\hat{\beta}_{\mathcal E}-\bar{\beta}_{\mathcal E}) ||_2 \le \sigma_{i^*+1} ||\bar{\beta}_{\mathcal E}-\hat{\beta}_{\mathcal E}||_2
\end{equation*}

Due to the singular value ordering, it holds that $\sigma_{i^*+1} = ||\bar{\sigma}^*||_\infty=$. The proof is completed by observing that the box constraint leads to the estimate
$$
||\bar{\beta}_{\mathcal E}-\hat{\beta}_{\mathcal E}||_2 \le  2l\sqrt{|\mathcal E|} \ .
$$


\noindent {\bf  (b).}
We observe that for all $i\in\mathcal E$
$$ \footnotesize
x_{\mathcal E,i} = \sum_{j=1}^{|\mathcal E|} c_{ij} v_j \ ,
$$
where $c_{ij} = x_{\mathcal E,i}^Tv_j$ is the projection coefficient of the $i$-th sample to the $j$-th principal vector. Thus, it holds that
\begin{equation*} \small
\begin{aligned}
x_i^T(\bar{\beta}-\hat{\beta}) &= x_{\mathcal E,i}^T(\bar{\beta}_{\mathcal E}-\hat{\beta}_{\mathcal E})
= \sum_{j=1}^{|\mathcal E|} c_{ij} v_j^T(\bar{\beta}_{\mathcal E}-\hat{\beta}_{\mathcal E}) \\
&= \sum_{j=1}^{i^*} c_{ij} v_j^T(\bar{\beta}_{\mathcal E}-\hat{\beta}_{\mathcal E}) + \sum_{j=i^*+1}^{|\mathcal E|} c_{ij} v_j^T(\bar{\beta}_{\mathcal E}-\hat{\beta}_{\mathcal E}) \\
&= \sum_{j=i^*+1}^{|\mathcal E|} c_{ij} v_j^T(\bar{\beta}_{\mathcal E}-\hat{\beta}_{\mathcal E}) =: e_i
\end{aligned}
\end{equation*}
because $\bar{\beta}\in K^*$ implies that $v_j^T\bar{\beta}_{\mathcal E}=v_j^T\hat{\beta}_{\mathcal E}$ for $j=1,...,i^*$.
Proceeding, the convexity of the function $\log(1+\exp(\cdot))$ implies that
$$
\log(1+\exp(x_i^T\bar{\beta})) \leq \log(1+\exp(x_i^T\hat{\beta})) + e_i\frac{\exp(x_i^T\bar{\beta})}{1+\exp(x_i^T\bar{\beta})} \ .
$$
Hence,
\vspace{-2mm}
\begin{equation*}\small
\begin{aligned}
DEV(\bar{\beta}) &\leq DEV(\hat{\beta}) + \frac{1}{n} \sum_{i=1}^n e_i \left( \frac{\exp(x_i^T\bar{\beta})}{1+\exp(x_i^T\bar{\beta})} - y_i\right) \\
&\leq DEV(\hat{\beta}) + \frac{1}{n} | e^T z| \ ,
\end{aligned}
\end{equation*}
where $e$ and $z$ are $n$-dimensional vectors with elements $e_i$ and $z_i = \frac{\exp(x_i^T\bar{\beta})}{1+\exp(x_i^T\bar{\beta})} - y_i$. Using the Cauchy-Schwarz inequality, we get
$$
DEV(\bar{\beta}) \leq DEV(\hat{\beta}) + \frac{1}{n} ||e||_2 ||z||_2
\leq DEV(\hat{\beta}) + \frac{1}{\sqrt{n}} ||e||_2
$$
where the last inequality is the consequence of the fact that $|z_i|\leq 1$ for all $i=1,...,n$, hence, $||z||_2 \leq \sqrt{n}$.

Finally, we estimate
\begin{equation*}\small
\begin{aligned}
||e||_2^2 &= \sum_{i=1}^n \left( \sum_{j=i^*+1}^{|\mathcal E|} c_{ij} v_j^T(\bar{\beta}_{\mathcal E}-\hat{\beta}_{\mathcal E})\right)^2
= \sum_{i=1}^n \sum_{j=i^*+1}^{|\mathcal E|} c_{ij}^2 ||\bar{\beta}_{\mathcal E}-\hat{\beta}_{\mathcal E}||_2^2 \\
&= ||\bar{\beta}_{\mathcal E}-\hat{\beta}_{\mathcal E}||_2^2 \sum_{j=i^*+1}^{|\mathcal E|} \sum_{i=1}^n c_{ij}^2 
= ||\bar{\beta}_{\mathcal E}-\hat{\beta}_{\mathcal E}||_2^2 \sum_{j=i^*+1}^{|\mathcal E|} \sigma_j^2 \\
&= ||\bar{\beta}_{\mathcal E}-\hat{\beta}_{\mathcal E}||_2^2 ||\bar{\sigma}^*||_2^2 \ ,
\end{aligned}
\end{equation*}
where we use the fact that $\sigma_j=||X_{\mathcal E} v_j||_2$. Combining the above with the bound for the difference $(\bar{\beta}_{\mathcal E}-\hat{\beta}_{\mathcal E})$, we obtain the desired bound.
\end{proof}

\end{document}